\colorlet{darkblue}{blue!55!black}
\colorlet{darkcyan}{cyan!50!black}
\colorlet{darkgreen}{green!60!black}
\def\eqref#1{\textcolor{darkblue}{(\ref{#1})}}
\let\oldequation\equation
\let\oldendequation\endequation
\renewenvironment{equation}{\linenomathNonumbers\oldequation}{\oldendequation\endlinenomath}
\let\expandafter\oldequationstar\csname equation*\endcsname
\let\expandafter\oldendequationstar\csname endequation*\endcsname
\renewenvironment{equation*}{\linenomathNonumbers\oldequationstar}{\oldendequationstar\endlinenomath}
\let\oldalign\align
\let\oldendalign\endalign
\let\expandafter\oldalignstar\csname align*\endcsname
\let\expandafter\oldendalignstar\csname endalign*\endcsname
\renewenvironment{align*}{\linenomathNonumbers\oldalignstar}{\oldendalignstar\endlinenomath}
\theoremstyle{plain}
\newtheorem{theorem}{Theorem}[section]
\newtheorem{lemma}[theorem]{Lemma}
\newtheorem{corollary}[theorem]{Corollary}
\newtheorem{proposition}[theorem]{Proposition}
\theoremstyle{definition}
\newtheorem{definition}[theorem]{Definition}
\newtheorem{remark}[theorem]{Remark}
\newtheorem*{ack}{Acknowledgments}
\numberwithin{equation}{section}
\numberwithin{theorem}{section}
\title[Perfect generation for regular algebraic stacks]{Perfect generation for regular algebraic stacks}
\author[P.~Lank]{Pat Lank}
\address{P.~Lank,
Dipartimento di Matematica “F. Enriques”, Universit\`{a} degli Studi di Milano, Via Cesare
Saldini 50, 20133 Milano, Italy}
\email{plankmathematics@gmail.com}
\date{\today}
\keywords{Algebraic stacks, perfect complexes, Thomason condition, derived categories, compact objects}
\subjclass[2020]{14A30 (primary), 14D23, 14F08, 18G80} 
\begin{document}
 
\begin{abstract}
    We show a single perfect complex generates the derived category of complexes with quasi-coherent cohomology on a quasi-compact quasi-separated regular algebraic stack with quasi-finite diagonal. A key ingredient includes gluing generators along recollements.
\end{abstract} 

\maketitle

%\tableofcontents

%%%%%%%%%%%%%%%%%%%%%%%%%%%%%%%%%%%%%
\section{Introduction}
\label{sec:intro}
%%%%%%%%%%%%%%%%%%%%%%%%%%%%%%%%%%%%%

%%%%%%%%%%%%%%%%%%%%%%%%%%%%%%%%%%%%%
\subsection{What is known}
\label{sec:intro_what_is_known}
%%%%%%%%%%%%%%%%%%%%%%%%%%%%%%%%%%%%%

Consider the derived category $D_{\operatorname{qc}}$ of complexes with quasi-coherent cohomology on an algebraic stack. Understanding generators of $D_{\operatorname{qc}}$ are important for studying its structure. It is classical that $D_{\operatorname{qc}}$ is singly compactly generated for quasi-compact quasi-separated schemes \cite[Theorem 3.1.1]{Bondal/VandenBergh:2003}. 

Early extensions for single object compact generation to various classes of algebraic stacks were obtained in \cite[Corollary 5.2]{Toen:2012}, \cite[proof of Proposition 5.5]{Krishna:2009}, and \cite[\S 3.3]{Ben-Zvi/Francis/Nadler:2010}. More recently, the work of Hall--Rydh has pushed these results further. These include quasi-compact quasi-separated algebraic stacks with quasi-finite separated diagonal \cite[Theorem A]{Hall/Rydh:2017a} and quasi-compact quasi-separated Deligne--Mumford $\mathbb{Q}$-stacks \cite[Theorem 7.4]{Hall/Rydh:2018}.

%%%%%%%%%%%%%%%%%%%%%%%%%%%%%%%%%%%%%
\subsection{What is new}
\label{sec:intro_what_is_new}
%%%%%%%%%%%%%%%%%%%%%%%%%%%%%%%%%%%%%

It was shown in \cite[Theorem 2.1]{Hall:2022} that $D_{\operatorname{qc}}$ is countably compactly generated for any concentrated \textit{regular} algebraic stack. It is natural to ask whether countable compact generation can be improved to generation by a single object. As a first step towards addressing this question, we use a weaker form of generation (see \Cref{def:generation_weak}).

\begin{theorem}
    \label{thm:generate}
    Let $\mathcal{X}$ be a quasi-compact quasi-separated regular algebraic stack with quasi-finite diagonal. Then $D_{\operatorname{qc}}(\mathcal{X})$ is generated by a single perfect complex.
\end{theorem}

In particular, \Cref{thm:generate} applies to cases for which compact generation is not necessarily known. For example, quasi-DM stacks which are smooth and finitely presented over a field or DVR (perhaps of mixed characteristic). Returning to the concentrated case:

\begin{corollary}
    \label{cor:generate}
    If $\mathcal{X}$ is concentrated, then $D_{\operatorname{qc}}(\mathcal{X})$ is singly compactly generated.
\end{corollary}

As a consequence, $\mathcal{X}$ and all of its closed substacks (which need not be regular) satisfy the $1$-Thomason condition of \cite{Hall/Rydh:2017a}. This provides new examples in arbitrary characteristic, e.g.\ if $\mathcal{X}$ has affine and nice stabilizers (see \cite[Theorem 2.1]{Hall/Rydh:2015}). Our arguments combines Nisnevich d\'{e}vissage for algebraic stacks with recollement techniques for gluing generators. To our knowledge, this approach has not previously been used to establish single object generation in this generality.

The proof of \Cref{thm:generate} proceeds in two steps. First, we use the presentations of \cite{Hall/Rydh:2018} for algebraic stacks with finite stabilizers. This allows us to induct on the length of a monomorphic splitting sequence of a Nisnevich covering. Second, we use recollement techniques for triangulated categories to glue generators (see \Cref{prop:stacky_recollement}). This reduces the problem to detecting single object generation on suitable locally closed substacks. The regularity hypothesis enters through a finite duality argument, which allows us to show that the relative categories $D_{\operatorname{qc},Z}$ admit single generators.

\begin{remark}
    It is not obvious whether \Cref{thm:generate} can be upgraded to single compact generation without imposing concentratedness. The reason is that compact generation need not always glue along recollements. 
\end{remark}

\begin{ack}
    Lank was supported by ERC Advanced Grant 101095900-TriCatApp. The author greatly appreciates comments and/or discussions with Jarod Alper, Timothy De Deyn, Jack Hall, Michal Hrbek, Kabeer Manali Rahul, Fei Peng, and David Rydh. The author thanks Hall for communicating the improvements of \cite{Hall:2022}. The author appreciates comments given by an anonymous referee.
\end{ack}

%%%%%%%%%%%%%%%%%%%%%%%%%%%%%%%%%%%%%
\section{Algebraic stacks}
\label{sec:algebraic_stacks}
%%%%%%%%%%%%%%%%%%%%%%%%%%%%%%%%%%%%%

Our conventions for algebraic stacks are those of \cite{StacksProject}. For the derived pullback and pushforward adjunction, we adopt the conventions of \cite[\S1]{Hall/Rydh:2017a} and \cite{Olsson:2007a,Laszlo/Olsson:2008a,Laszlo/Olsson:2008b}. Unless otherwise specified, symbols such as $X$, $Y$, etc.\ denote schemes or algebraic spaces, while $\mathcal{X}$, $\mathcal{Y}$, etc.\ denote algebraic stacks. In this section, let $\mathcal{X}$ be a quasi-compact quasi-separated algebraic stack. 

%%%%%%%%%%%%%%%%%%%%%%%%%%%%%%%%%%%%%%%%%%%%
\subsubsection{Categories}
\label{sec:prelim_stacks_categories}
%%%%%%%%%%%%%%%%%%%%%%%%%%%%%%%%%%%%%%%%%%%%

Let $\operatorname{Mod}(\mathcal{X})$ denote the Grothendieck abelian category of sheaves of $\mathcal{O}_\mathcal{X}$-modules on the lisse-\'{e}tale site of $\mathcal{X}$. Define $\operatorname{Qcoh}(\mathcal{X})$ to be the strictly full subcategory (i.e.\ full and closed under isomorphisms) of $\operatorname{Mod}(\mathcal{X})$ consisting of quasi-coherent sheaves. Set $D(\mathcal{X}) := D(\operatorname{Mod}(\mathcal{X}))$ for the derived category of $\operatorname{Mod}(\mathcal{X})$. Denote by $D_{\operatorname{qc}}(\mathcal{X})$ the full subcategory of $D(\mathcal{X})$ consisting of complexes with quasi-coherent cohomology. Finally, let $\operatorname{Perf}(\mathcal{X})$ denote the full subcategory of perfect complexes in $D_{\operatorname{qc}}(\mathcal{X})$. 

%%%%%%%%%%%%%%%%%%%%%%%%%%%%%%%%%%%%%%%%%%%%
\subsubsection{Support}
\label{sec:prelim_stacks_support}
%%%%%%%%%%%%%%%%%%%%%%%%%%%%%%%%%%%%%%%%%%%%

Let $M \in \operatorname{Qcoh}(\mathcal{X})$. Set $\operatorname{supp}(M) := p\big(\operatorname{supp}(p^\ast M)\big) \subseteq |\mathcal{X}|$ where $p \colon U \to \mathcal{X}$ is any smooth surjective morphism from a scheme. One can check that this definition is independent of the choice of $p$. Now, given any $E \in D_{\operatorname{qc}}(\mathcal{X})$, let
\begin{displaymath}
    \operatorname{supp}(E) := \bigcup_{j \in \mathbb{Z}} \operatorname{supp}\big(\mathcal{H}^j(E)\big) \subseteq |\mathcal{X}|.
\end{displaymath}
This subset of $|\mathcal{X}|$ is called the \textbf{support} of $E$. Given a open immersion $j\colon \mathcal{U} \to \mathcal{X}$, $D_{\operatorname{qc},Z}(\mathcal{X}):=\ker (\mathbf{L}j^\ast)$ where $Z=|\mathcal{X}|\setminus |\mathcal{U}|$. In particular,
\begin{displaymath}
    D_{\operatorname{qc},Z}(\mathcal{X}) = \{ E \in D_{\operatorname{qc}}(\mathcal{X}) : \operatorname{supp}(E)\subseteq Z \}.
\end{displaymath}

\begin{lemma}
    \label{lem:support_for_pullback}
    Let $f\colon \mathcal{Y}\to \mathcal{X}$ be a morphism of quasi-compact quasi-separated algebraic stacks. Consider an open immersion $j\colon \mathcal{U}\to \mathcal{X}$. Set $Z:= |\mathcal{X}|\setminus |\mathcal{U}|$. Then 
    \begin{displaymath}
        \mathbf{L}f^\ast (D_{\operatorname{qc},Z}(\mathcal{X}))\subseteq D_{\operatorname{qc},f^{-1}(Z)}(\mathcal{Y}).
    \end{displaymath}
\end{lemma}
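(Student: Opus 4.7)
The plan is to reinterpret $D_{\operatorname{qc},Z}(\mathcal{X})$ as the kernel of restriction to the open complement $\mathcal{X}\setminus Z$, and then exploit functoriality of derived pullback. First I would introduce the open substacks $\mathcal{V}:=\mathcal{X}\setminus Z$ and $\mathcal{W}:=\mathcal{Y}\setminus f^{-1}(Z)$, together with the corresponding quasi-compact open immersions $j\colon \mathcal{V}\hookrightarrow \mathcal{X}$ and $j'\colon \mathcal{W}\hookrightarrow \mathcal{Y}$. Since open immersions are stable under base change and $\mathcal{W}\cong \mathcal{V}\times_\mathcal{X}\mathcal{Y}$, the morphism $f$ restricts to a morphism $f'\colon \mathcal{W}\to \mathcal{V}$ satisfying $f\circ j' = j\circ f'$.

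Next I would observe that $E\in D_{\operatorname{qc}}(\mathcal{X})$ lies in $D_{\operatorname{qc},Z}(\mathcal{X})$ precisely when every cohomology sheaf $\mathcal{H}^i(E)$ vanishes on $\mathcal{V}$. Since $j$ is flat, $\mathbf{L}j^\ast = j^\ast$ is exact, and $\mathcal{H}^i(j^\ast E)\cong j^\ast \mathcal{H}^i(E)$; hence the support condition is equivalent to $\mathbf{L}j^\ast E = 0$. The analogous equivalence holds on $\mathcal{Y}$ for $j'$, so the lemma reduces to showing that $\mathbf{L}(j')^\ast \mathbf{L}f^\ast E = 0$ whenever $\mathbf{L}j^\ast E = 0$.

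This last step I expect to be immediate from functoriality of derived pullback applied to the identity $f\circ j' = j\circ f'$, yielding
\[
\mathbf{L}(j')^\ast \mathbf{L}f^\ast E \;\cong\; \mathbf{L}(f\circ j')^\ast E \;=\; \mathbf{L}(j\circ f')^\ast E \;\cong\; \mathbf{L}(f')^\ast \mathbf{L}j^\ast E \;=\; 0.
\]
I do not anticipate a serious obstacle. The only point worth verifying is the kernel characterization $D_{\operatorname{qc},Z}(\mathcal{X}) = \ker(\mathbf{L}j^\ast)$ under the conventions of \cite{Hall/Rydh:2017a} used in this paper, which follows from exactness of flat pullback on quasi-coherent modules together with the fact that support is computed via smooth covers.
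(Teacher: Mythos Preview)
Your proof is correct and takes a genuinely different, more elementary route than the paper's. The paper reduces to the affine case by choosing smooth surjections $U\to\mathcal{X}$ and $V\to\mathcal{Y}\times_\mathcal{X}U$, then uses that $D_{\operatorname{qc},Z}$ of an affine scheme is compactly generated by perfect complexes; since every object is built from these via shifts, coproducts, cones, and homotopy colimits, it suffices to check the claim on the perfect generators, where the support formula $\operatorname{supp}(\mathbf{L}f^\ast P)=f^{-1}(\operatorname{supp}(P))$ from \cite[Lemma 4.8(2)]{Hall/Rydh:2017a} applies. Your argument bypasses all of this by identifying $D_{\operatorname{qc},Z}(\mathcal{X})$ with $\ker(\mathbf{L}j^\ast)$ and using functoriality of derived pullback across the fibered square $\mathcal{W}\cong\mathcal{V}\times_\mathcal{X}\mathcal{Y}$. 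This is shorter, needs no compact generation input, and works uniformly without the reduction to schemes. One small quibble: you describe $j$ and $j'$ as quasi-compact open immersions, but the lemma does not assume $Z$ has quasi-compact complement. Fortunately nothing in your argument uses quasi-compactness of $j$ or $j'$—restriction along an open immersion is exact regardless—so simply drop that adjective.
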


\begin{proof}
    Let $E\in D_{\operatorname{qc},Z}(\mathcal{X})$. Denote by $f^\prime\colon f^{-1}(\mathcal{U}) \to \mathcal{U}$ the base change of $j$ along $f$. Since $\mathbf{L}j^\ast E\cong 0$, it follows that $\mathbf{L}(j\circ f^\prime)^\ast E\cong0$. Set $j^\prime\colon f^{-1}(\mathcal{U}) \to \mathcal{Y}$ the open immersion obtained by base change. Then $\mathbf{L}(f\circ j\prime)^\ast E\cong0$, and so, $\mathbf{L}f^\ast E \in D_{\operatorname{qc},f^{-1}(Z)}(\mathcal{Y})$.
\end{proof}

\begin{lemma}
    \label{lem:pushforward_of_support}
    Let $f\colon \mathcal{Y}\to \mathcal{X}$ be a finite morphism of Noetherian algebraic stacks. For any closed subset $Z\subseteq |\mathcal{Y}|$, one has $\mathbf{R}f_\ast D_{\operatorname{qc},Z}(\mathcal{Y})\subseteq D_{\operatorname{qc},f(Z)}(\mathcal{X})$.
\end{lemma}

\begin{proof}
    Since $f$ is finite, the problem is smooth local. So, we can reduce to Noetherian schemes. However, this is known, see e.g.\ \cite[Remark 23.46(2)]{Gortz/Wedhorn:2023}. 
    %%OLD: (for proper and representable by schemes) Here, $f$ is concentrated. So, the problem is smooth local via \cite[Theorem 2.6(2)]{Hall/Rydh:2017a}. Hence, we can impose Noetherian schemes $X$ and $Y$. In such a case, $D_{\operatorname{qc},Z}(Y)$ is compactly generated by perfect complexes on $Y$ which have support contained in $Z$  (see \cite[Theorem 6.8]{Rouquier:2008}). Furthermore, every object of $D_{\operatorname{qc},Z}(Y)$ is a homotopy colimit of iterated extensions of small coproducts of shifts of objects in $\mathcal{B}$ (see \cite[\href{https://stacks.math.columbia.edu/tag/09SN}{Tag 09SN}]{StacksProject}). Therefore, the desired claim follows if we can show $\mathbf{R}f_\ast \mathcal{B}\subseteq  D_{\operatorname{qc},f(Z)}(X)$. However, this is known for proper morphisms of Noetherian schemes, see e.g.\ \cite[Remark 23.46(2)]{Gortz/Wedhorn:2023}. 
\end{proof}

%%%%%%%%%%%%%%%%%%%%%%%%%%%%%%%%%%%%%%%%%%%%
\subsubsection{Concentratedness}
\label{sec:prelim_stacks_concentrated}
%%%%%%%%%%%%%%%%%%%%%%%%%%%%%%%%%%%%%%%%%%%%

A quasi-compact quasi-separated morphism of algebraic stacks is called \textbf{concentrated} if for every base change along a quasi-compact quasi-separated morphism, the derived pushforward has finite cohomological dimension. For instance, by \cite[Lemma 2.5(3)]{Hall/Rydh:2017a}, morphisms which are representable by algebraic spaces are concentrated. An algebraic stack is \textbf{concentrated} if its structure morphism to $\operatorname{Spec}(\mathbb{Z})$ is concentrated. See \cite[\S2, Lemma 2.5(5), \& Remark 4.6]{Hall/Rydh:2017a} for details.

%%%%%%%%%%%%%%%%%%%%%%%%%%%%%%%%%%%%%%%%%%%%
\subsubsection{Perfect complexes}
\label{sec:prelim_stacks_perfects}
%%%%%%%%%%%%%%%%%%%%%%%%%%%%%%%%%%%%%%%%%%%%

Perfect complexes may be defined on any ringed site \cite[\href{https://stacks.math.columbia.edu/tag/08G4}{Tag 08G4}]{StacksProject}, in particular on the lisse-\'{e}tale site of $\mathcal{X}$. A complex is \textbf{strictly perfect} if it is a bounded complex whose terms are direct summands of finite free modules; it is \textbf{perfect} if it is locally strictly perfect. Any compact object of $D_{\operatorname{qc}}(\mathcal{X})$ is a perfect complex and the support of a perfect complex has quasi-compact complement (see \cite[Lemmas 4.4 \& 4.8]{Hall/Rydh:2017a}).

%%%%%%%%%%%%%%%%%%%%%%%%%%%%%%%%%%%%%%%%%%%%
\subsubsection{Thomason condition}
\label{sec:prelim_stacks_Thomason}
%%%%%%%%%%%%%%%%%%%%%%%%%%%%%%%%%%%%%%%%%%%%

We say that $\mathcal{X}$ satisfies the \textbf{$\beta$-Thomason condition}, for some cardinal $\beta$, if $D_{\operatorname{qc}}(\mathcal{X})$ is compactly generated by a set of cardinality at most $\beta$, and if for every closed subset $Z \subseteq |\mathcal{X}|$ with quasi-compact complement there exists a perfect complex $P \in \operatorname{Perf}(\mathcal{X})$ such that $\operatorname{supp}(P) = Z$.

%%%%%%%%%%%%%%%%%%%%%%%%%%%%%%%%%%%%%
\section{Recollements}
\label{sec:recollement}
%%%%%%%%%%%%%%%%%%%%%%%%%%%%%%%%%%%%%

%%%%%%%%%%%%%%%%%%%%%%%%%%%%%%%%%%%%%
\subsection{Reminder}
\label{sec:recollement_reminder}
%%%%%%%%%%%%%%%%%%%%%%%%%%%%%%%%%%%%%

We briefly recall the notion of a recollement. See \cite[\S 1.4]{Beilinson/Berstein/Deligne/Gabber:2018} for details. A \textbf{recollement} is a commutative diagram of triangulated categories and exact functors of the form 
\begin{equation}
    \label{eq:recollement}
    % https://q.uiver.app/#q=WzAsMyxbMCwwLCJcXG1hdGhjYWx7VH0iXSxbMiwwLCJcXG1hdGhjYWx7S30iXSxbNCwwLCJcXG1hdGhjYWx7RH0iXSxbMCwxLCJJIiwxXSxbMSwyLCJRIiwxXSxbMSwwLCJJX1xcbGFtYmRhIiwyLHsiY3VydmUiOjV9XSxbMSwwLCJJX1xccmhvIiwwLHsiY3VydmUiOi01fV0sWzIsMSwiUV9cXGxhbWJkYSIsMix7ImN1cnZlIjo1fV0sWzIsMSwiUV9cXHJobyIsMCx7ImN1cnZlIjotNX1dXQ==
    \begin{tikzcd}
        {\mathcal{T}} && {\mathcal{K}} && {\mathcal{D}}
        \arrow["I"{description}, from=1-1, to=1-3]
        \arrow["{I_\lambda}"', bend right =25pt, from=1-3, to=1-1]
        \arrow["{I_\rho}", bend right =-25pt, from=1-3, to=1-1]
        \arrow["Q"{description}, from=1-3, to=1-5]
        \arrow["{Q_\lambda}"', bend right =25pt, from=1-5, to=1-3]
        \arrow["{Q_\rho}", bend right =-25pt, from=1-5, to=1-3]
    \end{tikzcd}
\end{equation}
%% MISC: 'left' = \lambda and 'right' = \rho for adjoint functors!! Neeman suggested this idea
satisfying:
\begin{itemize}
    \item $I_\lambda \dashv I \dashv I_\rho$ and $Q_\lambda \dashv Q \dashv Q_\rho$ (i.e.\ adjoint triples)
    \item $I, Q_\lambda, Q_\rho$ are fully faithful
    \item $\ker (Q)$ coincides with the strictly full subcategory on objects of the form $I(T)$ where $T\in \mathcal{T}$.
\end{itemize}
In such a case, there are distinguished triangles
\begin{displaymath}
    \begin{aligned}
        (Q_\lambda \circ Q )(E) & \to E \to (I \circ I_\lambda )(E) \to (Q_\lambda \circ Q) (E)[1],
        \\& (I \circ I_\rho) (E) \to E \to (Q_\rho \circ Q) (E) \to (I \circ I_\rho )(E)[1]
    \end{aligned}
\end{displaymath}
which are functorial in $\mathcal{K}$. In particular, the natural transformations between these functors are given by the (co)units of the relevant adjoint pairs. Since $Q_\lambda$, $Q$, $I$, and $I_\lambda$ are left adjoints, they preserve coproducts. 
%%NOTE: Additionally, as $I$ and $Q$ admit right adjoints, $I_\lambda$ and $Q_\lambda$ preserve compact objects (see \cite[Theorem 5.1]{Neeman:1996}) in the case each triangulated category is compactly generated.
%%MISC: One way to digest these facts, as suggested by Neeman, is that $\lambda$ starts with `L', which insinuates `left' adjoint.

%%%%%%%%%%%%%%%%%%%%%%%%%%%%%%%%%%%%%
\subsection{A special case}
\label{sec:recollement_stacks}
%%%%%%%%%%%%%%%%%%%%%%%%%%%%%%%%%%%%%

We record some results in setting of algebraic stacks. The following is well-known for schemes (see e.g.\ \cite[Theorem 1]{Jorgensen:2009}).

\begin{proposition}
    \label{prop:stacky_recollement}
    Let $\mathcal{X}$ be a quasi-compact quasi-separated algebraic stack. Suppose $j\colon \mathcal{U}\to \mathcal{X}$ is a quasi-compact open immersion. Set $Z:=|\mathcal{X}|\setminus |\mathcal{U}|$. There exists a recollement
    \begin{displaymath}
        % https://q.uiver.app/#q=WzAsMyxbNCwwLCJEX3tcXG9wZXJhdG9ybmFtZXtxY30sWn0oXFxtYXRoY2Fse1h9KSJdLFsyLDAsIkRfe1xcb3BlcmF0b3JuYW1le3FjfX0oXFxtYXRoY2Fse1h9KSJdLFswLDAsIkRfe1xcb3BlcmF0b3JuYW1le3FjfX0oXFxtYXRoY2Fse1V9KSJdLFsxLDAsImleISIsMV0sWzIsMSwiXFxtYXRoYmZ7Un1qX1xcYXN0IiwxXSxbMCwxLCJpX1xcYXN0IiwyLHsib2Zmc2V0IjozLCJjdXJ2ZSI6Mn1dLFswLDEsImleXFxhc3QiLDAseyJvZmZzZXQiOi0zLCJjdXJ2ZSI6LTJ9XSxbMSwyLCJcXG1hdGhiZntMfWpeXFxhc3QiLDIseyJvZmZzZXQiOjMsImN1cnZlIjoyfV0sWzEsMiwial5cXHRpbWVzIiwwLHsib2Zmc2V0IjotMywiY3VydmUiOi0yfV1d
        \begin{tikzcd}
            {D_{\operatorname{qc}}(\mathcal{U})} && {D_{\operatorname{qc}}(\mathcal{X})} && {D_{\operatorname{qc},Z}(\mathcal{X})}
            \arrow["{\mathbf{R}j_\ast}"{description}, from=1-1, to=1-3]
            \arrow["{\mathbf{L}j^\ast}"', shift right=3, bend right = 12pt, from=1-3, to=1-1]
            \arrow["{j^\times}", shift left=3, bend right = -12pt, from=1-3, to=1-1]
            \arrow["{i^!}"{description}, from=1-3, to=1-5]
            \arrow["{i_\ast}"', shift right=3, bend right = 12pt, from=1-5, to=1-3]
            \arrow["{i^\ast}", shift left=3, bend right = -12pt, from=1-5, to=1-3]
        \end{tikzcd}
    \end{displaymath}
    where $i_\ast$ is the natural inclusion and $j^\times$ the right adjoint of $\mathbf{R}j_\ast$. Furthermore, for any closed $W\subseteq |\mathcal{X}|$ such that $D_{\operatorname{qc},|\mathcal{U}|\cap W}(\mathcal{U})$ is compactly generated, there exists a recollement
    \begin{displaymath}
        % https://q.uiver.app/#q=WzAsMyxbNCwwLCJEX3tcXG9wZXJhdG9ybmFtZXtxY30sV1xcY2FwIFp9KFxcbWF0aGNhbHtYfSkiXSxbMiwwLCJEX3tcXG9wZXJhdG9ybmFtZXtxY30sV30oXFxtYXRoY2Fse1h9KSJdLFswLDAsIkRfe1xcb3BlcmF0b3JuYW1le3FjfSx8XFxtYXRoY2Fse1V9fFxcY2FwIFd9KFxcbWF0aGNhbHtVfSkiXSxbMSwwLCJpXiEiLDFdLFsyLDEsIlxcbWF0aGJme1J9al9cXGFzdCIsMV0sWzAsMSwiaV9cXGFzdCIsMix7Im9mZnNldCI6MywiY3VydmUiOjJ9XSxbMCwxLCJpXlxcYXN0IiwwLHsib2Zmc2V0IjotMywiY3VydmUiOi0yfV0sWzEsMiwiXFxtYXRoYmZ7TH1qXlxcYXN0IiwyLHsib2Zmc2V0IjozLCJjdXJ2ZSI6Mn1dLFsxLDIsImpeISIsMCx7Im9mZnNldCI6LTMsImN1cnZlIjotMn1dXQ==
        \begin{tikzcd}
            {D_{\operatorname{qc},|\mathcal{U}|\cap W}(\mathcal{U})} && {D_{\operatorname{qc},W}(\mathcal{X})} && {D_{\operatorname{qc},W\cap Z}(\mathcal{X})}
            \arrow["{\mathbf{R}j_\ast}"{description}, from=1-1, to=1-3]
            \arrow["{\mathbf{L}j^\ast}"', shift right=3, bend right = 12pt, from=1-3, to=1-1]
            \arrow["{j^!}", shift left=3, bend right = -12pt, from=1-3, to=1-1]
            \arrow["{i^!}"{description}, from=1-3, to=1-5]
            \arrow["{i_\ast}"', shift right=3, bend right = 12pt, from=1-5, to=1-3]
            \arrow["{i^\ast}", shift left=3, bend right = -12pt, from=1-5, to=1-3]
        \end{tikzcd}
    \end{displaymath}
    where $i_\ast$ is the natural inclusion.
\end{proposition}

\begin{proof}
    We prove the first claim. First, we spell out the existence of the needed functors. Recall that the right adjoint $j^\times$ of $\mathbf{R}j_\ast$ exists because $j$ is concentrated (see \cite[Theorem 4.14(1)]{Hall/Rydh:2017a}). By \cite[Example 1.2]{Hall/Rydh:2017b}, there is a Verdier localization
    \begin{displaymath}
        D_{\operatorname{qc},Z}(\mathcal{X}) \xrightarrow{i_\ast} D_{\operatorname{qc}}(\mathcal{X}) \xrightarrow{\mathbf{L}j^\ast} D_{\operatorname{qc}}(\mathcal{U}).
    \end{displaymath}
    Note that $\mathbf{R}j_\ast$ is right adjoint to $\mathbf{L}j^\ast$ on $D_{\operatorname{qc}}$ (see \cite[\S 1.3]{Hall/Rydh:2017a}). So, by \cite[Lemma 2.2(ii)]{Gao/Psaroudakis:2018}, $i_\ast$ must admit a right adjoint, which we denote by $i^!$ (loc.\ cit.\ pulls from \cite[Theorem 1.1]{Cline/Parshall/Scott:1988a} and \cite[Theorem 2.1]{Cline/Parshall/Scott:1988b}). It follows that 
    \begin{displaymath}
        D_{\operatorname{qc},Z}(\mathcal{X}) \xleftarrow{i^!} D_{\operatorname{qc}}(\mathcal{X}) \xleftarrow{\mathbf{R}j_\ast} D_{\operatorname{qc}}(\mathcal{U})
    \end{displaymath}
    is a Verdier localization sequence. 
    %%NOTE: The closed-open derived pullback Verdierlocalizaiton implies this is a colocalization. See e.g. Def 43 and Lem 104 of \url{http://therisingsea.org/notes/TriangulatedCategories.pdf}
    However, being that $j^\times$ is the right adjoint of $\mathbf{R}j_\ast$ on $D_{\operatorname{qc}}$, we can apply \cite[Lemma 2.2(ii)]{Gao/Psaroudakis:2018} once more to see that $i^!$ admits a right adjoint as well, which we denote by $i^\ast$. Tying things together, we have the required data for a recollement; i.e.\ a Verdier localization sequence which is a localization and colocalization sequence in the sense of \cite[\S 4]{Krause:2010}.

    We show the second claim This mimics the proof above but a few details need to be spelled out. In fact, the difference between the two claims is whether or not $D_{\operatorname{qc},W}$ is compactly generated. By \Cref{lem:support_for_pullback}, the restriction of $\mathbf{L}j^\ast$ to $D_{\operatorname{qc},W}(\mathcal{X})$ factors through $D_{\operatorname{qc},|\mathcal{U}|\cap W}(\mathcal{U})$. Moreover, the restriction of $\mathbf{R}j_\ast$ to $D_{\operatorname{qc},|\mathcal{U}|\cap W}(\mathcal{U})$ factors through $D_{\operatorname{qc},W}(\mathcal{X})$, which can be verified using \cite[Theorem 2.6(2)]{Hall/Rydh:2017a} and checking smooth locally.
    %%NOTE: This pushforward agrees with the lisse-etale pushforwward becase it is concentrated. So, the problem is smooth local. Hence, we can reduce to the case of schemes.
    Hence, the adjoint pair $\mathbf{L}j^\ast$ and $\mathbf{R}j_\ast$ restricts to $D_{\operatorname{qc},W}(\mathcal{X})$ and $D_{\operatorname{qc},|\mathcal{U}|\cap W}(\mathcal{U})$. Furthermore, there is a Verdier localization
    \begin{displaymath}
        D_{\operatorname{qc},W\cap Z}(\mathcal{X}) \xrightarrow{i_\ast} D_{\operatorname{qc},W}(\mathcal{X}) \xrightarrow{\mathbf{L}j^\ast} D_{\operatorname{qc},|\mathcal{U}|\cap W}(\mathcal{U}).
    \end{displaymath}
    Indeed, the counit of $\mathbf{L}j^\ast$ and $\mathbf{R}j_\ast$ is a natural isomorphism on $D_{\operatorname{qc}}$, and so the restriction $\mathbf{L}j^\ast\colon D_{\operatorname{qc},W}(\mathcal{X}) \to D_{\operatorname{qc},|\mathcal{U}|\cap W}(\mathcal{U})$ is essentially surjective. To see that fully faithfulness holds, use e.g.\ \cite[\S I.\ Proposition 1.3]{Gabriel/Zisman:1967} and \cite[Lemma 4.3.1]{Krause:2010}). Note that restriction of $\mathbf{R}j_\ast$ is right adjoint to the restriction of $\mathbf{L}j^\ast$. So, by \cite[Lemma 2.2(ii)]{Gao/Psaroudakis:2018}, $i_\ast$ must admit a right adjoint as well, which we denote by $i^!$ (loc.\ cit.\ pulls from \cite[Theorem 1.1]{Cline/Parshall/Scott:1988a} and \cite[Theorem 2.1]{Cline/Parshall/Scott:1988b}). It follows that 
    \begin{displaymath}
        D_{\operatorname{qc},Z,\cap W}(\mathcal{X}) \xleftarrow{i^!} D_{\operatorname{qc},W}(\mathcal{X}) \xleftarrow{\mathbf{R}j_\ast} D_{\operatorname{qc},|\mathcal{U}|\cap W}(\mathcal{U})
    \end{displaymath}
    is a Verdier localization sequence. 
    %%NOTE: The closed-open derived pullback Verdier localizaiton implies this is a colocalization. See e.g. Def 43 and Lem 104 of \url{http://therisingsea.org/notes/TriangulatedCategories.pdf}
    However, $D_{\operatorname{qc},|\mathcal{U}|\cap W}(\mathcal{U})$ is compactly generated and the restriction of $\mathbf{R}j_\ast$ preserves small coproducts. Hence, \cite[Corollary 2.3]{Balmer/Dell'Ambrogio/Sanders:2016} tells us the restriction of $\mathbf{R}j_\ast$ admits a right adjoint. Now, we can apply \cite[Lemma 2.2(ii)]{Gao/Psaroudakis:2018} once more to see that $i^!$ admits a right adjoint as well, which we denote by $i^\ast$. Tying things together, we have the required data for a recollement; i.e.\ a Verdier localization sequence which is a localization and colocalization sequence in the sense of \cite[\S 4]{Krause:2010}. 
\end{proof}

\begin{remark}
    \label{rmk:stacky_recollement_compacts_preserved_by_ishriek}
    Consider the recollement in \Cref{prop:stacky_recollement}. In this case, $i^!$ preserves small coproducts, and so, $i_\ast$ preserves compact objects (see e.g.\ the proof of $\implies$ in \cite[Theorem 5.1]{Neeman:1996}; which this fact does not require compact generation). Then, from \cite[Lemma 4.4(1)]{Hall/Rydh:2017a}, it follows that any compact object of $D_{\operatorname{qc},Z}(\mathcal{X})$ must belong to $\operatorname{Perf}(\mathcal{X})$.
\end{remark}

\begin{remark}
    In \Cref{prop:stacky_recollement}, the restriction of $\mathbf{R}j_\ast$ to $D_{\operatorname{qc},|\mathcal{U}|\cap W}(\mathcal{U})\to D_{\operatorname{qc},W}(\mathcal{X})$ admits a right adjoint. However, it is not clear to us whether it is the restriction of the right adjoint $j^\times$ of $\mathbf{R}j_\ast$ on $D_{\operatorname{qc}}$.
\end{remark}

%%%%%%%%%%%%%%%%%%%%%%%%%%%%%%%%%%%%%
\subsection{Generating}
\label{sec:recollement_generating}
%%%%%%%%%%%%%%%%%%%%%%%%%%%%%%%%%%%%%

We record a few useful lemmas allowing one to glue generators along a recollement. 

\begin{definition}
    \label{def:generation_weak}
    Recall that a collection $\mathcal{B}$ in a category $\mathcal{C}$ is said to \textbf{generate} $\mathcal{C}$ if for any nonzero $E\in \mathcal{C}$ there is an $n\in \mathbb{Z}$ and $B\in\mathcal{B}$ such that $\operatorname{Hom}(B[n],E)\not\cong 0$.
\end{definition}

\begin{lemma}
    \label{lem:pullback_generator}
    Let $F\colon \mathcal{T} \leftrightarrows \mathcal{S} \colon G$ be an adjoint pair of exact functors between triangulated categories admitting small coproducts. Suppose $\mathcal{T}$ is generated by a collection $\mathcal{B}$. Then $G$ is conservative (i.e.\ $G(A)\cong 0 \implies A\cong 0$) if, and only if, $F(\mathcal{B})$ generates $\mathcal{S}$. In such a case, if $\mathcal{T}$ is generated by a set of cardinality $\leq\beta$ for some cardinal $\beta$, then $\mathcal{B}$ satisfies the same condition.
\end{lemma}

\begin{proof}
    This is known to a few but we spell it out for convienence. Let $E\in \mathcal{B}$ satisfy $\operatorname{Hom}(F(B),E[n])\cong 0$ for all $B\in \mathcal{B}$ and $n\in \mathbb{Z}$. From adjunction, we know that $\operatorname{Hom}(B, G (E)[n])\cong 0$. As $\mathcal{B}$ generates $\mathcal{T}$, it follows that $G(E)\cong 0$. However, $G$ being conservative implies $E\cong 0$. So, $F(\mathcal{B})$ generates $\mathcal{S}$. 
    
    Conversely, assume that $F(\mathcal{B})$ generates $\mathcal{S}$. Let $E\in \mathcal{S}$ satisfy $G(E)\cong 0$. By adjunction, it follows that $0\cong\operatorname{Hom}(F(B),E[n])$ for all $B\in \mathcal{B}$ and $n\in \mathbb{Z}$. Yet, the assumption implies $E\cong 0$. Hence, $G$ must be conservative.

    That the last claim holds follows from the proof above.
\end{proof}

\begin{corollary}
    \label{cor:pullback_compact_generator}
    Let $F\colon \mathcal{T} \leftrightarrows \mathcal{S} \colon G$ be an adjoint pair of exact functors between triangulated categories admitting small coproducts. Suppose $\mathcal{T}$ is compactly generated by a collection $\mathcal{B}$ and $G$ commutes with small coproducts. Then $G$ is conservative if, and only if, $F(\mathcal{B})$ compactly generates $\mathcal{S}$. In such a case, if $\mathcal{T}$ is compactly generated by a set of cardinality $\leq\beta$ for some cardinal $\beta$, then $\mathcal{B}$ satisfies the same condition.
\end{corollary}

\begin{proof}
    That $G$ commutes with small coproducts ensures that $F(\mathcal{T}^c)\subseteq \mathcal{S}^c$ (see e.g.\ the proof of $\implies$ in \cite[Theorem 5.1]{Neeman:1996}; which this fact does not require $\mathcal{T}$ to be compactly generated). Consequently, the desired claim follows from \Cref{lem:pullback_generator}.
\end{proof}

\begin{proposition}
    \label{prop:glue_along_recollement}
    Consider a recollement as in \eqref{eq:recollement}. Let $\mathcal{G}$ be a subcategory of $\mathcal{K}$ such that $I_\lambda (\mathcal{G})$ generates $\mathcal{T}$. If $\mathcal{G}^\prime$ is a subcategory which generates $\mathcal{D}$, then the collection of $G\oplus Q_\lambda(G^\prime)$ where $G\in \mathcal{G}$ and $G^\prime \in \mathcal{G}^\prime$ generates $\mathcal{K}$.
\end{proposition}

\begin{proof}
    Let $E\in \mathcal{K}$ satisfy $\operatorname{Hom}((G\oplus Q_\lambda(G^\prime)) [n],E)=0$ for all $n\in \mathbb{Z}$, $G\in \mathcal{G}$, and $G^\prime \in \mathcal{G}^\prime$. From the recollement above, there is a distinguished triangle 
    \begin{displaymath}
        (Q_\lambda \circ Q )(E) \to E \to (I \circ I_\lambda )(E) \to (Q_\lambda \circ Q) (E)[1].
    \end{displaymath}
    Here, in such a case, we have that $\operatorname{Hom}(Q_\lambda (G^\prime) [n] , E) \cong 0$ for all $n\in \mathbb{Z}$. So, adjunction tells us $\operatorname{Hom}(G^\prime [n] , Q(E))\cong 0$ for all $n\in \mathbb{Z}$. However, $\mathcal{G}^\prime$ generates $\mathcal{D}$, so $Q(E)\cong 0$. This implies the morphism $E \to (I \circ I_\lambda )(E)$ from the distinguished triangle above is an isomorphism. Once more, from adjunction, we then have that $\operatorname{Hom}( G [n] , (I \circ I_\lambda )(E))=0$ for all $n\in \mathbb{Z}$. So, $\operatorname{Hom}(I_{\lambda} (G) [n] , I_\lambda (E))=0$ via adjunction for any such $n$. However, $I_{\lambda} (\mathcal{G})$ generates $\mathcal{T}$, so $I_\lambda (E)$. Consequently, $E\cong 0$, which completes the proof.
\end{proof}

%%%%%%%%%%%%%%%%%%%%%%%%%%%%%%%%%%%%%
\section{Proofs}
\label{sec:proofs}
%%%%%%%%%%%%%%%%%%%%%%%%%%%%%%%%%%%%%

This section contains our main results.

\begin{lemma}
    \label{lem:descent_for_generation}
    Let $f\colon \mathcal{Y}\to \mathcal{X}$ be a finite morphism of Noetherian algebraic stacks. Then the adjoint pair $\mathbf{R}f_\ast$ and $f^\times$ restricts to $D_{\operatorname{qc}}(\mathcal{Y})$ and $D_{\operatorname{qc}, f(|\mathcal{Y}|)}(\mathcal{X})$. Additionally, if $D_{\operatorname{qc}}(\mathcal{Y})$ is generated by some $\mathcal{B}\subseteq D^b_{\operatorname{coh}}(\mathcal{X})$ and $\mathbf{R}f_\ast \mathcal{O}_{\mathcal{Y}}\in \operatorname{Perf}(\mathcal{X})$, then $\mathbf{R}f_\ast \mathcal{B}$ generates $D_{\operatorname{qc}, f(|\mathcal{Y}|)}(\mathcal{X})$.
\end{lemma}

\begin{proof}
    By \cite[Theorem 4.14(1)]{Hall/Rydh:2017a}, $\mathbf{R}f_\ast$ admits a right adjoint $f^\times$ on $D_{\operatorname{qc}}$.
    %%NOTE: exists because $f$ is concentrated. 
    Clearly, $f^\times (D_{\operatorname{qc}, f(|\mathcal{Y}|)}(\mathcal{X}))\subseteq D_{\operatorname{qc}}(\mathcal{Y})$. Moreover, \Cref{lem:pushforward_of_support} says $\mathbf{R}f_\ast (D_{\operatorname{qc}}(\mathcal{Y}))\subseteq D_{\operatorname{qc}, f(|\mathcal{Y}|)}(\mathcal{X})$.
    %%NOTE: because $f$ is finite.
    Thus, the adjunction restricts.
    %%OLD PROOF: Let $E\in D_{\operatorname{qc}}(\mathcal{Y})$. Since $D_{\operatorname{qc}}(\mathcal{Y})$ is compactly generated by $\mathcal{B}$, \cite[Corollary 3.4.9]{Krause:2022} (or \cite[\href{https://stacks.math.columbia.edu/tag/09SN}{Tag 09SN}]{StacksProject}) tells us $E$ can be written as the homotopy colimit of a sequence
    % \begin{displaymath}
    %     E_0 \xrightarrow{\phi_0} E_1 \xrightarrow{\phi_1} E_2 \xrightarrow{\phi_2} \cdots
    % \end{displaymath}
    % such that $E_0\cong 0$ and $\operatorname{cone}(\phi_i)$ is a small coproduct of shifts of objects in $\mathcal{B}$. Now, as $\mathbf{R}f_\ast$ preserves homotopy colimits, it suffices to show $\mathbf{R}f_\ast E_i$ and $\operatorname{cone}(\mathbf{R}f_\ast \phi_i)$ belong to $D_{\operatorname{qc}, f(|\mathcal{Y}|)}(\mathcal{X})$. Using an inductive argument on $i$, it suffices to show small coproducts of shifts of objects from $\mathcal{B}$ satisfy the same condition. Furthermore, $\mathbf{R}f_\ast$ preserving small coproducts and shifts, so we can reduce to checking the same condition on just the objects of $\mathcal{B}$. However, this can be seen by reducing to schemes (see e.g.\ \cite[Remark 23.46(2)]{Gortz/Wedhorn:2023}), e.g.\ $\mathbf{R}f_\ast \mathcal{B} \subseteq D^b_{\operatorname{coh},f(|\mathcal{Y}|)}(\mathcal{X})$.
    % %NOTE: Use that $f$ is representable by schemes

    Lastly, we show that $\mathbf{R}f_\ast \mathcal{B}$ generates $D_{\operatorname{qc}, f(|\mathcal{Y}|)}(\mathcal{X})$. By \Cref{lem:pullback_generator}, it suffices to show that $f^\times\colon D_{\operatorname{qc}, f(|\mathcal{Y}|)}(\mathcal{X}) \to D_{\operatorname{qc}}(\mathcal{Y})$ is conservative. So, let $E\in D_{\operatorname{qc},f(|\mathcal{Y}|)}(\mathcal{X})$ satisfy $f^\times E\cong 0$. By \cite[Theorem 4.14(2)]{Hall/Rydh:2017a}, it follows that 
    \begin{displaymath}
        0 \cong \mathbf{R}f_\ast f^\times E \cong \operatorname{\mathbf{R}\mathcal{H}\! \mathit{om}} (\mathbf{R}f_\ast \mathcal{O}_{\mathcal{Y}}, E).
    \end{displaymath}
    However, $\mathbf{R}f_\ast \mathcal{O}_{\mathcal{Y}}\in \operatorname{Perf}(\mathcal{X})\cap D_{\operatorname{qc},f(|\mathcal{Y}|)}(\mathcal{X})$, and so, \cite[Lemma 4.9]{Hall/Rydh:2017a} ensures that $E\cong 0$. Hence, we see that the restriction of $f^\times$ on $D_{\operatorname{qc},f(|\mathcal{Y}|)}(\mathcal{X})$ is conservative.
\end{proof}

\begin{lemma}
    [cf.\ {\cite[Observation 5.6]{Neeman:2023}}]
    \label{lem:injective_mono_is_thomason}
    Let $t\colon \mathcal{Y}\to \mathcal{X}$ be a quasi-affine morphism of quasi-compact quasi-separated algebraic stacks. Suppose $t$ is injective on the underlying topological spaces. If $\mathcal{X}$ satisfies the $\beta$-Thomason condition, then so does $\mathcal{Y}$. 
\end{lemma}

\begin{proof}
    As $t$ is quasi-affine, \cite[Lemma 8.2]{Hall/Rydh:2017a} says that $D_{\operatorname{qc}}(\mathcal{Y})$ is compactly generated by a collection of at most $\leq \beta$ objects. Next, let $Z$ be a closed subset of $|\mathcal{Y}|$. Set $Z^{\prime}$ to be the closure of $t(Z)$ in $|\mathcal{X}|$. Then $t^{-1}(Z^\prime)=Z$. Moreover, the hypothesis on $\mathcal{X}$ allows us to find a $P\in \operatorname{Perf}(\mathcal{X})$ such that $\operatorname{supp}(P)=Z^\prime$. By \cite[Lemma 4.8(2)]{Hall/Rydh:2017a}, $t^{-1}(\operatorname{supp}(P))=\operatorname{supp}(\mathbf{L}t^\ast P)$.
    %%NOTE: This completes the proof because $\mathbf{L}t^\ast P\in \operatorname{Perf}(\mathcal{Y})$.
\end{proof}

\begin{proof}
    [Proof of \Cref{thm:generate}]
    The proof proceeds by an inductive argument. However, to set the stage, we start with a construction needed for such an argument. By \cite[Theorem 4.1]{Hall/Rydh:2018}, there exist morphisms of algebraic stacks $V\xrightarrow{p} \mathcal{Y} \xrightarrow{f} \mathcal{X}$ satisfying
    \begin{itemize}
        \item $V$ is an affine scheme
        \item $p$ is finite, flat, surjective, and of finite presentation
        \item $f$ is a Nisnevich covering (see \cite[Definition 3.1]{Hall/Rydh:2018}) of finite presentation with separated diagonal (in fact, $f$ is \'{e}tale and surjective). 
    \end{itemize}
    Moreover, from \cite[Proposition 3.1]{Hall/Rydh:2018}, there exists a monomorphic splitting sequence for $f$ in the sense of loc.\ cit.\ Specfically, we have a sequence of open immersions 
    \begin{displaymath}
        \emptyset=:\mathcal{X}_0 \xrightarrow{j_0} \mathcal{X}_1 \xrightarrow{j_1}\cdots \xrightarrow{j_{n-2}} \mathcal{X}_{n-1}\xrightarrow{j_{n-1}} \mathcal{X}_n =:\mathcal{X}
    \end{displaymath}
    such that the base change of $f$ to $|\mathcal{X}_c| \setminus |\mathcal{X}_{c-1}|$ (when given the reduced closed substack structure) admits a monomorphic section for each $c=1,\ldots, n$.
    
    Next, before going to the induction argument, we make an observation for components of the construction above. Fix $c=1,\ldots, n$. Consider the fibered square
    \begin{displaymath}
        % https://q.uiver.app/#q=WzAsNCxbMSwwLCIofFxcbWF0aGNhbHtYfV9jfCBcXHNldG1pbnVzIHxcXG1hdGhjYWx7WH1fe2MtMX18KV97cmVkfSJdLFsxLDEsIlxcbWF0aGNhbHtYfSJdLFswLDEsIlxcbWF0aGNhbHtZfSJdLFswLDAsIlxcbWF0aGNhbHtZfV9jIl0sWzIsMSwiZiIsMl0sWzMsMCwiZl9jIl0sWzMsMiwiaF9jIiwyXSxbMCwxLCJqXlxccHJpbWVfYyBcXGNpcmMgdF9jIl1d
        \begin{tikzcd}
            {\mathcal{Y}_c} & {(|\mathcal{X}_c| \setminus |\mathcal{X}_{c-1}|)_{red}} \\
            {\mathcal{Y}} & {\mathcal{X}}
            \arrow["{f_c}", from=1-1, to=1-2]
            \arrow["{h_c}"', from=1-1, to=2-1]
            \arrow["{j^\prime_c \circ t_c}", from=1-2, to=2-2]
            \arrow["f"', from=2-1, to=2-2]
        \end{tikzcd}
    \end{displaymath}
    obtained by base change of $f$ along $j^\prime_c \circ t_c$ where $j^\prime_c$ is the associated open immersion and $t_c$ the associated closed immersion. Choose a monomorphic section $g_c \colon (|\mathcal{X}_c| \setminus |\mathcal{X}_{c-1}|)_{red} \to \mathcal{Y}_c$. This gives us a commutative diagram 
    \begin{displaymath}
        % https://q.uiver.app/#q=WzAsMyxbMSwxLCIofFxcbWF0aGNhbHtYfV9jfCBcXHNldG1pbnVzIHxcXG1hdGhjYWx7WH1fe2MtMX18KV97cmVkfS4iXSxbMSwwLCJcXG1hdGhjYWx7WX1fYyJdLFswLDAsIih8XFxtYXRoY2Fse1h9X2N8IFxcc2V0bWludXMgfFxcbWF0aGNhbHtYfV97Yy0xfXwpX3tyZWR9Il0sWzEsMCwiZl9jIl0sWzIsMSwiZ19jIl0sWzIsMCwiXFx0ZXh0cm17aWQufSIsMl1d
        \begin{tikzcd}
            {(|\mathcal{X}_c| \setminus |\mathcal{X}_{c-1}|)_{red}} & {\mathcal{Y}_c} \\
            & {(|\mathcal{X}_c| \setminus |\mathcal{X}_{c-1}|)_{red}.}
            \arrow["{g_c}", from=1-1, to=1-2]
            \arrow["{\textrm{id.}}"', from=1-1, to=2-2]
            \arrow["{f_c}", from=1-2, to=2-2]
        \end{tikzcd}
    \end{displaymath}
    Base change tells us that $f_c$ is \'{e}tale and of finite presentation. Hence, $g_c$ must be \'{e}tale (see e.g.\ \cite[\href{https://stacks.math.columbia.edu/tag/0CIR}{Tag 0CIR}]{StacksProject}). Furthermore, since $g_c$ is monomorphic, it is separated, of finite type, and representable by algebraic spaces (see e.g.\ \cite[\href{https://stacks.math.columbia.edu/tag/06MY}{Tag 06MY} \& \href{https://stacks.math.columbia.edu/tag/050J}{Tag 050J}]{StacksProject}). Yet, each algebraic stack in our proof is Noetherian, so $g_c$ must be of finite presentation (see e.g.\ \cite[\href{https://stacks.math.columbia.edu/tag/0DQJ}{Tag 0DQJ}]{StacksProject}). It follows that $g_c$ must be quasi-affine (use e.g.\ \cite[Proposition 3.1]{Olsson/Starr:2003}). There is a fibered square,
    \begin{displaymath}
        % https://q.uiver.app/#q=WzAsNCxbMSwwLCJcXG1hdGhjYWx7WX1fYyJdLFsxLDEsIlxcbWF0aGNhbHtZfS4iXSxbMCwxLCJWIl0sWzAsMCwiVl9jIl0sWzAsMSwiaF9jIl0sWzIsMSwicCIsMl0sWzMsMCwicF9jIl0sWzMsMiwiaF5cXHByaW1lX2MiLDJdXQ==
        \begin{tikzcd}
            {V_c} & {\mathcal{Y}_c} \\
            V & {\mathcal{Y}.}
            \arrow["{p_c}", from=1-1, to=1-2]
            \arrow["{h^\prime_c}"', from=1-1, to=2-1]
            \arrow["{h_c}", from=1-2, to=2-2]
            \arrow["p"', from=2-1, to=2-2]
        \end{tikzcd}
    \end{displaymath} 
    Again, from base change, we know that $h_c$, and hence $h^\prime_c$, is an immersion. Since any such morphism is quasi-affine, it follows that $V_c$ is a quasi-affine scheme. Hence, by \cite[Theorem C]{Hall/Rydh:2017a}, we know that $\mathcal{Y}_c$ must be $1$-Thomason because $p_c$ is a finite, flat, and surjective morphism of finite presentation from a quasi-affine scheme. Thus, from \Cref{lem:injective_mono_is_thomason}, we know that each $(|\mathcal{X}_c| \setminus |\mathcal{X}_{c-1}|)_{red}$ must be $1$-Thomason because $g_c$ is a quasi-affine monomorphism morphism (see e.g.\ \cite[\href{https://stacks.math.columbia.edu/tag/0500}{Tag 0500}]{StacksProject}). As each $\mathcal{X}_c$ is regular, \Cref{lem:descent_for_generation} tells us that $D_{\operatorname{qc},Z_c}(\mathcal{X_c})$ is singly compactly generated where $Z_c := t_c (|(|\mathcal{X}_c| \setminus |\mathcal{X}_{c-1}|)_{red}|)$. We can find $P_c \in \operatorname{Perf}(\mathcal{X}_c)$ which compactly generates $D_{\operatorname{qc},Z_c}(\mathcal{X_c})$.

    Now, we prove the desired claim by induction on $n$. Specifically, we are inducting the length of the monomorphic splitting sequence of $f$. There is nothing to check if $n=0$. Moreover, the observation above addresses the base case $n=1$. So, we may assume that $n\geq 2$. By \cite[Proposition B.1]{Hall/Lamarche/Lank/Peng:2025}, there is a Verdier localization sequence
    \begin{displaymath}
        D^b_{\operatorname{coh},Z_c}(\mathcal{X}_c) \xrightarrow{(i_c)_\ast} D^b_{\operatorname{coh}}(\mathcal{X}_c) \xrightarrow{\mathbf{L}j_{c-1}^\ast} D^b_{\operatorname{coh}}(\mathcal{X}_{c-1})
    \end{displaymath}
    where $(i_c)_\ast$ is the natural inclusion. However, \cite[Theorem 3.7]{DeDeyn/Lank/ManaliRahul/Peng:2025} tells us that $\operatorname{Perf}=D^b_{\operatorname{coh}}$ in each case. This allows us to find a $G_c\in \operatorname{Perf}(\mathcal{X}_c)$ such that $\mathbf{L}j_{c-1}^\ast G_c$ generates $D_{\operatorname{qc}}(\mathcal{X}_{c-1})$. Consider the recollement obtained in \Cref{prop:stacky_recollement},
    \begin{displaymath}
        % https://q.uiver.app/#q=WzAsMyxbNCwwLCJEX3tcXG9wZXJhdG9ybmFtZXtxY30sWl9jfShcXG1hdGhjYWx7WH1fYykiXSxbMiwwLCJEX3tcXG9wZXJhdG9ybmFtZXtxY319KFxcbWF0aGNhbHtYfV9jKSJdLFswLDAsIkRfe1xcb3BlcmF0b3JuYW1le3FjfX0oXFxtYXRoY2Fse1h9X3tjLTF9KSJdLFsxLDAsImleIV9jIiwxXSxbMiwxLCJcXG1hdGhiZntSfShqX3tjLTF9KV9cXGFzdCIsMV0sWzAsMSwiKGlfYylfXFxhc3QiLDIseyJvZmZzZXQiOjMsImN1cnZlIjoyfV0sWzAsMSwiaV5cXGFzdF9jIiwwLHsib2Zmc2V0IjotMywiY3VydmUiOi0yfV0sWzEsMiwiXFxtYXRoYmZ7TH1qXlxcYXN0X3tjLTF9IiwyLHsib2Zmc2V0IjozLCJjdXJ2ZSI6Mn1dLFsxLDIsImpeXFx0aW1lc19jIiwwLHsib2Zmc2V0IjotMywiY3VydmUiOi0yfV1d
        \begin{tikzcd}
            {D_{\operatorname{qc}}(\mathcal{X}_{c-1})} && {D_{\operatorname{qc}}(\mathcal{X}_c)} && {D_{\operatorname{qc},Z_c}(\mathcal{X}_c)}
            \arrow["{\mathbf{R}(j_{c-1})_\ast}"{description}, from=1-1, to=1-3]
            \arrow["{\mathbf{L}j^\ast_{c-1}}"', shift right=3, bend right = 12pt, from=1-3, to=1-1]
            \arrow["{j^\times_c}", shift left=3, bend right = -12pt, from=1-3, to=1-1]
            \arrow["{i^!_c}"{description}, from=1-3, to=1-5]
            \arrow["{(i_c)_\ast}"', shift right=3, bend right = 12pt, from=1-5, to=1-3]
            \arrow["{i_c^\ast}", shift left=3, bend right = -12pt, from=1-5, to=1-3]
        \end{tikzcd}
    \end{displaymath}
    where $(i_c)_\ast$ is the natural inclusion and $j^\times_c$ the right adjoint of $\mathbf{R}j_\ast$. So, in particular, $(i_c)_\ast P_c = P_c$. Then, from \Cref{prop:glue_along_recollement}, we know that $G_c\oplus P_c$ generates $D_{\operatorname{qc}}(\mathcal{X}_c)$. This establishes the induction step. Moreover, since $f$ always has monomorphic splitting sequence of finite length, we complete proof. 
\end{proof}

\begin{proof}
    [Proof of \Cref{cor:generate}]
    This follows from \Cref{thm:generate} and the fact that $\mathcal{X}$ is concentrated.
\end{proof}

\bibliographystyle{alpha}
\bibliography{mainbib}

@article{Hall/Rydh:2018,
    author = {Hall, Jack and Rydh, David},
    title = {Addendum to: {\'E}tale d{\'e}vissage, descent and pushouts of stacks},
    fjournal = {Journal of Algebra},
    journal = {J. Algebra},
    issn = {0021-8693},
    volume = {498},
    pages = {398--412},
    year = {2018},
    language = {English},
    doi = {10.1016/j.jalgebra.2017.11.027},
    keywords = {14A20,14F20,18A30,18F20},
    zbMATH = {6834838},
    Zbl = {1441.14006}
}

@book{Gabriel/Zisman:1967,
    author = {Gabriel, P. and Zisman, M.},
    title = {Calculus of fractions and homotopy theory.},
    fseries = {Ergebnisse der Mathematik und ihrer Grenzgebiete},
    series = {Ergeb. Math. Grenzgeb.},
    volume = {35},
    year = {1967},
    publisher = {Springer-Verlag, Berlin},
    language = {English},
    keywords = {18G30,18-02,55-02},
    zbMATH = {3297895},
    Zbl = {0186.56802}
}

@article{Neeman:2023,
    author = {Neeman, Amnon},
    title = {An improvement on the base-change theorem and the functor {{\(f^!\)}}},
    fjournal = {Bulletin of the Iranian Mathematical Society},
    journal = {Bull. Iran. Math. Soc.},
    issn = {1017-060X},
    volume = {49},
    number = {3},
    pages = {163},
    note = {Id/No 25},
    year = {2023},
    language = {English},
    doi = {10.1007/s41980-023-00768-6},
    keywords = {14F08,14A20,18G10,18G80},
    zbMATH = {7700381},
    Zbl = {1527.14038}
}

@misc{Hall:2022,
    title={Further remarks on derived categories of algebraic stacks},
    author={Jack Hall},
    year={2022},
    url={https://arxiv.org/abs/2205.09312},
    eprint={2205.09312v4},
    archivePrefix={arXiv},
    howpublished    = {\href{https://arxiv.org/abs/2205.09312}{arXiv:2205.09312v4}},
    publisher     = {arXiv},
}

@article{Hall/Rydh:2015,
    author = {Hall, Jack and Rydh, David},
    title = {Algebraic groups and compact generation of their derived categories of representations},
    fjournal = {Indiana University Mathematics Journal},
    journal = {Indiana Univ. Math. J.},
    issn = {0022-2518},
    volume = {64},
    number = {6},
    pages = {1903--1923},
    year = {2015},
    language = {English},
    doi = {10.1512/iumj.2015.64.5719},
    keywords = {14F05,14L15,13D09,14A20,18G10},
    url = {www.iumj.indiana.edu/IUMJ/ABS/2015/5719},
    zbMATH = {6543228},
    Zbl = {1348.14045}
}

@article{Jorgensen:2009,
    author = {J{\o}rgensen, Peter},
    title = {A new recollement for schemes},
    fjournal = {Houston Journal of Mathematics},
    journal = {Houston J. Math.},
    issn = {0362-1588},
    volume = {35},
    number = {4},
    pages = {1071--1077},
    year = {2009},
    language = {English},
    keywords = {18E30,14F05},
    url = {www.math.uh.edu/~hjm/Vol35-4.html},
    zbMATH = {5662760},
    Zbl = {1181.18008}
}

@article{Hall/Rydh:2017b,
    author = {Hall, Jack and Rydh, David},
    title = {The telescope conjecture for algebraic stacks},
    fjournal = {Journal of Topology},
    journal = {J. Topol.},
    issn = {1753-8416},
    volume = {10},
    number = {3},
    pages = {776--794},
    year = {2017},
    language = {English},
    doi = {10.1112/topo.12021},
    keywords = {14F05,18E30,14A20},
    zbMATH = {6786980},
    Zbl = {1401.14089}
}

@article{Cline/Parshall/Scott:1988a,
    author = {Cline, E. and Parshall, B. and Scott, L.},
    title = {Finite dimensional algebras and highest weight categories},
    fjournal = {Journal f{\"u}r die Reine und Angewandte Mathematik},
    journal = {J. Reine Angew. Math.},
    issn = {0075-4102},
    volume = {391},
    pages = {85--99},
    year = {1988},
    language = {English},
    keywords = {18E30,17B10,20G05,16Gxx,16P10},
    url = {https://eudml.org/doc/153071},
    zbMATH = {4073249},
    Zbl = {0657.18005}
}

@article{Cline/Parshall/Scott:1988b,
    author = {Cline, E. and Parshall, B. and Scott, L.},
    title = {Algebraic stratification in representation categories},
    fjournal = {Journal of Algebra},
    journal = {J. Algebra},
    issn = {0021-8693},
    volume = {117},
    number = {2},
    pages = {504--521},
    year = {1988},
    language = {English},
    doi = {10.1016/0021-8693(88)90123-8},
    keywords = {18E30,20G40,16Exx},
    zbMATH = {4077518},
    Zbl = {0659.18011}
}

@article{Gao/Psaroudakis:2018,
    author = {Gao, Nan and Psaroudakis, Chrysostomos},
    title = {Ladders of compactly generated triangulated categories and preprojective algebras},
    fjournal = {Applied Categorical Structures},
    journal = {Appl. Categ. Struct.},
    issn = {0927-2852},
    volume = {26},
    number = {4},
    pages = {657--679},
    year = {2018},
    language = {English},
    doi = {10.1007/s10485-017-9508-9},
    keywords = {18G80,16E10,16E65},
    zbMATH = {6916413},
    Zbl = {1428.18021}
}

@article{Bondal/VandenBergh:2003,
    author = {Bondal, A. and {Van den Bergh}, M.},
    title = {Generators and representability of functors in commutative and noncommutative geometry},
    fjournal = {Moscow Mathematical Journal},
    journal = {Mosc. Math. J.},
    issn = {1609-3321},
    volume = {3},
    number = {1},
    pages = {1--36},
    year = {2003},
    language = {English},
    keywords = {18E30,14F05},
    zbMATH = {2069670},
    Zbl = {1135.18302}
}

@article{Ben-Zvi/Francis/Nadler:2010,
    author = {Ben-Zvi, David and Francis, John and Nadler, David},
    title = {Integral transforms and {Drinfeld} centers in derived algebraic geometry},
    fjournal = {Journal of the American Mathematical Society},
    journal = {J. Am. Math. Soc.},
    issn = {0894-0347},
    volume = {23},
    number = {4},
    pages = {909--966},
    year = {2010},
    language = {English},
    doi = {10.1090/S0894-0347-10-00669-7},
    keywords = {14F05,18E30,14A20},
    zbMATH = {5797887},
    Zbl = {1202.14015}
}

@article{Krishna:2009,
    author = {Krishna, Amalendu},
    title = {Perfect complexes on {Deligne}-{Mumford} stacks and applications},
    fjournal = {Journal of \(K\)-Theory},
    journal = {J. \(K\)-Theory},
    issn = {1865-2433},
    volume = {4},
    number = {3},
    pages = {559--603},
    year = {2009},
    language = {English},
    doi = {10.1017/is008008021jkt067},
    keywords = {19E08,14D23,14F05,18E15,18E30,14C35},
    zbMATH = {5679317},
    Zbl = {1189.19003}
}

@article{Toen:2012,
    author = {To{\"e}n, Bertrand},
    title = {Derived {Azumaya} algebras and generators for twisted derived categories},
    fjournal = {Inventiones Mathematicae},
    journal = {Invent. Math.},
    issn = {0020-9910},
    volume = {189},
    number = {3},
    pages = {581--652},
    year = {2012},
    language = {English},
    doi = {10.1007/s00222-011-0372-1},
    keywords = {14F22,14A20,16E45,18F20,18E30},
    zbMATH = {6084026},
    Zbl = {1275.14017}
}

@article {Olsson:2007a,
    AUTHOR = {Olsson, Martin},
    TITLE = {Sheaves on {A}rtin stacks},
    JOURNAL = {J. Reine Angew. Math.},
    FJOURNAL = {Journal f\"ur die Reine und Angewandte Mathematik. [Crelle's
          Journal]},
    VOLUME = {603},
    YEAR = {2007},
    PAGES = {55--112},
    ISSN = {0075-4102,1435-5345},
    MRCLASS = {14A20 (14D20)},
    MRNUMBER = {2312554},
    MRREVIEWER = {Charles\ D.\ Cadman},
    DOI = {10.1515/CRELLE.2007.012},
    URL = {https://doi.org/10.1515/CRELLE.2007.012},
}

@article{Laszlo/Olsson:2008a,
    author = {Laszlo, Yves and Olsson, Martin},
    title = {The six operations for sheaves on {Artin} stacks. {I}: {Finite} coefficients},
    fjournal = {Publications Math{\'e}matiques},
    journal = {Publ. Math., Inst. Hautes {\'E}tud. Sci.},
    issn = {0073-8301},
    volume = {107},
    pages = {109--168},
    year = {2008},
    language = {English},
    doi = {10.1007/s10240-008-0011-6},
    keywords = {14A20,14F05},
    zbMATH = {5645797},
    Zbl = {1191.14002}
}

@article{Laszlo/Olsson:2008b,
    author = {Laszlo, Yves and Olsson, Martin},
    title = {The six operations for sheaves on {Artin} stacks. {II}: {Adic} coefficients},
    fjournal = {Publications Math{\'e}matiques},
    journal = {Publ. Math., Inst. Hautes {\'E}tud. Sci.},
    issn = {0073-8301},
    volume = {107},
    pages = {169--210},
    year = {2008},
    language = {English},
    doi = {10.1007/s10240-008-0012-5},
    keywords = {14A20,14F05,18F20,14F20,18E30},
    zbMATH = {5645798},
    Zbl = {1191.14003}
}

@book{Gortz/Wedhorn:2023,
    author = {G{\"o}rtz, Ulrich and Wedhorn, Torsten},
    title = {Algebraic geometry {II}: cohomology of schemes. {With} examples and exercises},
    fseries = {Springer Studium Mathematik -- Master},
    series = {Springer Stud. Math. -- Master},
    issn = {2509-9310},
    isbn = {978-3-658-43030-6; 978-3-658-43031-3},
    year = {2023},
    publisher = {Wiesbaden: Springer Spektrum},
    language = {English},
    doi = {10.1007/978-3-658-43031-3},
    keywords = {14-01,14A15,14C20,14F05,14L15,14M12,14L30,14B05},
    zbMATH = {7802900}
}

@incollection{Krause:2010,
    author = {Krause, Henning},
    title = {Localization theory for triangulated categories},
    booktitle = {Triangulated categories. Based on a workshop, Leeds, UK, August 2006},
    isbn = {978-0-521-74431-7},
    pages = {161--235},
    year = {2010},
    publisher = {Cambridge: Cambridge University Press},
    language = {English},
    keywords = {18E30,18E35},
    zbMATH = {5831327},
    Zbl = {1232.18012}
}

@misc{Hall/Lamarche/Lank/Peng:2025,
    title={Compact approximation and descent for algebraic stacks}, 
    author={Jack Hall and Alicia Lamarche and Pat Lank and Fei Peng},
    year={2025},
    url={https://arxiv.org/abs/2504.21125},
    eprint={2504.21125},
    archivePrefix={arXiv},
    primaryClass={math.AG},
    howpublished    = {\href{https://arxiv.org/abs/2504.21125}{arXiv:2504.21125}},
    publisher     = {arXiv},
}

@article{Hall/Rydh:2017a,
    author = {Hall, Jack and Rydh, David},
    title = {Perfect complexes on algebraic stacks},
    fjournal = {Compositio Mathematica},
    journal = {Compos. Math.},
    issn = {0010-437X},
    volume = {153},
    number = {11},
    pages = {2318--2367},
    year = {2017},
    language = {English},
    doi = {10.1112/S0010437X17007394},
    keywords = {14F05,13D09,14A20,18E30},
    zbMATH = {6810455},
    Zbl = {1390.14057}
}

@article{Olsson/Starr:2003,
    author = {Olsson, Martin and Starr, Jason},
    title = {Quot functors for {Deligne}-{Mumford} stacks},
    fjournal = {Communications in Algebra},
    journal = {Commun. Algebra},
    issn = {0092-7872},
    volume = {31},
    number = {8},
    pages = {4069--4096},
    year = {2003},
    language = {English},
    doi = {10.1081/AGB-120022454},
    keywords = {14A20,14C05},
    zbMATH = {1961959},
    Zbl = {1071.14002}
}

@Book{Beilinson/Berstein/Deligne/Gabber:2018,
    Author = {Beilinson, Alexander and Bernstein, Joseph and Deligne, Pierre and Gabber, Ofer},
    Title = {Faisceaux pervers. {Actes} du colloque ``{Analyse} et {Topologie} sur les {Espaces} {Singuliers}''. {Partie} {I}},
    Edition = {2nd edition},
    FSeries = {Ast{\'e}risque},
    Series = {Ast{\'e}risque},
    ISSN = {0303-1179},
    Volume = {100},
    ISBN = {978-2-85629-878-7},
    Year = {2018},
    Publisher = {Paris: Soci{\'e}t{\'e} Math{\'e}matique de France (SMF)},
    Language = {French},
    Keywords = {14F05,18F20,14-02,14G15,14C30,14F43},
    zbMATH = {6868966},
    Zbl = {1390.14055}
}

@misc{StacksProject,
    shorthand    = {Stacks},
    author       = {The {Stacks Project Authors}},
    title        = {\textit{Stacks Project}},
    howpublished = {\url{https://stacks.math.columbia.edu}},
    year         = {2026},
}

@article{Neeman:1996,
    author = {Neeman, Amnon},
    title = {The {Grothendieck} duality theorem via {Bousfield}'s techniques and {Brown} representability},
    fjournal = {Journal of the American Mathematical Society},
    journal = {J. Am. Math. Soc.},
    issn = {0894-0347},
    volume = {9},
    number = {1},
    pages = {205--236},
    year = {1996},
    language = {English},
    doi = {10.1090/S0894-0347-96-00174-9},
    keywords = {14F20,14C35,14F05,55P42},
    zbMATH = {868352},
    Zbl = {0864.14008}
}

@misc{DeDeyn/Lank/ManaliRahul/Peng:2025,
    title={Categorical characterizations of regularity for algebraic stacks}, 
    author={Timothy {De Deyn} and Pat Lank and Kabeer {Manali Rahul} and Fei Peng},
    year={2025},
    url={https://arxiv.org/abs/2504.02813},
    eprint={2504.02813},
    archivePrefix={arXiv},
    primaryClass={math.AG},
    howpublished    = {\href{https://arxiv.org/abs/2504.02813}{arXiv:2504.02813}},
    publisher     = {arXiv},
}

\end{document}